\theoremstyle{definition}
\theoremstyle{remark}
 \newtheorem{thm}{Theorem}[section]
 \newtheorem{cor}[thm]{Corollary}
 \newtheorem{lem}[thm]{Lemma}
 \newtheorem{prop}[thm]{Proposition}
\theoremstyle{remark}
\newtheorem{rem}[thm]{Remark}
\theoremstyle{definition}
\newtheorem{defn}[thm]{Definition}
\numberwithin{equation}{section}
\begin{document}

\title{Blowup of Solutions of the Hydrostatic Euler Equations}


\author{Tak Kwong WONG}
\address{Department of Mathematics\\ University of Pennsylvania\\
David Rittenhouse Lab.\\
209 South 33rd Street \\
 Philadelphia, PA 19104-6395, USA}

\email{takwong@math.upenn.edu}

\keywords{formation of singularity, ill-posedness, hydrostatic approximation}
\subjclass[2010]{Primary 35Q31; Secondary 35A01, 35L04, 35L60, 35Q35, 76B99}

\date{\today}



\begin{abstract}
In this paper we prove that for a certain class of
initial data, smooth solutions of the hydrostatic Euler equations
blow up in finite time.
\end{abstract}

\maketitle

\tableofcontents

 \newcommand{\RE}{\mathbb{R}}
 \newcommand{\abs}[1]{\left\vert#1\right\vert}


\section{Introduction}

  We consider the hydrostatic Euler equations\footnotemark \footnotetext{In the literature, it is also called homogeneous hydrostatic equations (see ~\cite{Bre99} for example) or inviscid Prandtl's equations (see ~\cite{E00} for example).} in a two-dimensional tube $\RE\times[0,1]=\{(x,y);0\le y \le 1\}:$

  \begin{align}\label{e:HHE}
  u_t+uu_x+vu_y=-p_x,
  \end{align}
  \begin{align}\label{e:IncomEqt}
  u_x+v_y=0,
  \end{align}
  with given initial data
  \begin{align}\label{e:InitialData}
  u(0,x,y)=u_0(x,y),
  \end{align}
  and boundary condition
  \begin{align}\label{e:bdryCon}
  v(t,x,0)=v(t,x,1)=0,
  \end{align}
  where $(u,v,p)=(u(t,x,y),v(t,x,y),p(t,x))$ are unknowns.

  This system, which describes the leading behavior of an ideal flow moving in a very narrow domain $\RE\times [0,\epsilon]$, can be derived formally by the least action principle \cite{Bre08} or a rescaled limit \cite{Lio96}(\S4.6). The rescaled limit, called the hydrostatic limit in the literature, can be rigorously justified for the periodic flows under the local Rayleigh condition ~\cite{Bre03,Gre99,MW}.

  At the present time, there are no global existence results for \eqref{e:HHE}-\eqref{e:bdryCon}. Even for the local well-posedness, there are only three results. The first result was obtained under the local Rayleigh condition by Y. Brenier in \cite{Bre99}, in which the local existence of a special class of classical solutions was proved by applying a semi-Lagrangian reformulation. Another approach is using the energy method, the authors in \cite{MW} recently established the local well-posedness of $x$-periodic $H^s$ solutions under the local Rayleigh condition. Without this condition, the local existence and uniqueness of analytic solutions were also proved by a Cauchy-Kowalevski type argument in \cite{KTVZ11}.

  Regarding the blowup, there is a recent result \cite{CINT} which proved that for a certain class of initial data, spatially symmetric smooth solutions will blow up in finite time. 

  Our blowup result, which does not rely on the symmetry, is an analogous result for the unsteady Prandtl equations by W. E and B. Engquist ~\cite{EE97}. We prove that a smooth solution with certain class of initial data will blow up in finite time, see theorem \ref{t:main} below for the details. The blowup is either in
  $\nobreak{\max\{\| u(t)\|_{L^\infty},\| p_x (t)\|_{L^\infty}\}}$ or $\| u_x(t)\|_{L^\infty}$. The first case corresponds to the infinite horizontal velocity or pressure gradient, which is non-physical in certain sense. The second case corresponds to the formation of singularity.

  The main novelty of this proof is to ``freeze" a smooth solution by a classical invariant transformation, which will be given in section \ref{s:proof}. After ``freezing" the solution, we will apply an a priori estimate on the second derivative of the pressure term, which is a simple consequence of lemma \ref{l:1third} below, to derive a Ricatti type inequality. This provides the blowup. Finally, for the sake of self-containedness, the proof of a technical lemma will be given in section \ref{s:pflemma}.


\section{Main Theorem and Its Proof}
  The prime objective of this paper is to prove the following

  \begin{thm}[Blowup]\label{t:main}
    Let $(u,v,p)$ be a smooth solution to \eqref{e:HHE}-\eqref{e:bdryCon}. Suppose there exist a position $\hat{x}\in\RE$ and a constant horizontal velocity $\hat{u}\in\RE$ such that
    the initial data $u_0$ satisfies the following properties at $x = \hat{x}$:
    \begin{align}
      u_0(\hat{x},y)\equiv\hat{u}, & \text{ for all } y\in[0,1], \label{e:u0}\\
      u_{0xy}(\hat{x},0)=0, & \text{ and } \label{e:u0xy}\\
      u_{0xyy}(\hat{x},y)<0, & \text{ for all } y\in(0,1) \label{e:u0xyy}.
    \end{align}
    Then there exists a finite time $T>0$ such that either
    \begin{align}\label{e:limu&px}
     \lim_{t\to T^-} \max\{\| u(t) \|_{L^\infty},\| p_x(t)\|_{L^\infty}\}=+\infty,
    \end{align}

    or
    \begin{align}\label{e:limux}
      \lim_{t\to T^-} u_x(t,X(t,\hat{x},1),1)=-\infty,
    \end{align}
    where $X(t,\hat{x},1)$ is the $x$-component of the characteristic\footnote{For the precise definition of $X$,
    see definition \ref{d:XY eqn} below.}
    starting from $(\hat{x},1)$.
  \end{thm}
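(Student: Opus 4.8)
The plan is to localize the entire problem onto the single vertical fluid column sitting above $x=\hat x$, show the system degenerates there to a one-dimensional Riccati inequality, and then use Lemma~\ref{l:1third} to keep the pressure term favorable. First I would ``freeze'' the solution: since \eqref{e:HHE}--\eqref{e:IncomEqt} are invariant under $(t,x,y)\mapsto(t,x-\hat u t,y)$ together with $(u,v,p)\mapsto(u-\hat u,v,p)$ and under translation in $x$, one may assume $\hat x=0$ and $\hat u=0$, so \eqref{e:u0} becomes $u_0(0,\cdot)\equiv 0$. Differentiating \eqref{e:HHE} in $y$ and using \eqref{e:IncomEqt} gives $\tfrac{D}{Dt}u_y=0$ with $\tfrac{D}{Dt}:=\partial_t+u\partial_x+v\partial_y$, so $u_y$ is transported by the flow; since $u_{0y}(0,\cdot)\equiv0$, $u_y$ vanishes on the flow image of $\{0\}\times[0,1]$ for all time. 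Feeding this into $\partial_t(\partial_{y_0}X)=u_x\,\partial_{y_0}X+u_y\,\partial_{y_0}Y$ (for the characteristics of Definition~\ref{d:XY eqn}) and using $\partial_{y_0}X(0,0,\cdot)=0$ shows $X(t,0,\cdot)$ is independent of $y_0$; hence that column stays a vertical segment $\{\xi(t)\}\times[0,1]$ on which $u$ is constant in $y$, $u\equiv U(t)$, with $\xi(t)=X(t,\hat x,1)$ and $\xi'(t)=U(t)$.

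Next I would set up the Riccati quantity $a(t):=u_x(t,\xi(t),1)=u_x(t,X(t,\hat x,1),1)$. Differentiating \eqref{e:HHE} in $x$ gives $\tfrac{D}{Dt}u_x=-u_x^2-v_x u_y-p_{xx}$; evaluating at the moving point $(\xi(t),1)$, where $v=0$ by \eqref{e:bdryCon} and $u_y=0$ by the previous step, yields $a'(t)=-a(t)^2-p_{xx}(t,\xi(t))$. For the pressure, integrating \eqref{e:HHE} over $y\in[0,1]$ and using \eqref{e:IncomEqt}--\eqref{e:bdryCon} (which in particular make $\int_0^1 u\,dy$ independent of $x$) gives $p_x=-\partial_t\!\int_0^1 u\,dy-\partial_x\!\int_0^1 u^2\,dy$, hence
\[
p_{xx}=-\partial_x^2\!\int_0^1 u^2\,dy=-2\!\int_0^1 u_x^2\,dy-2\!\int_0^1 u\,u_{xx}\,dy .
\]
At $x=\xi(t)$ the first term is $-2\int_0^1 u_x(t,\xi(t),y)^2\,dy$, while the second vanishes because $u\equiv U(t)$ is constant in $y$ and $\int_0^1 u_{xx}(t,\xi(t),y)\,dy=\partial_x^2\!\int_0^1 u\,dy\big|_{x=\xi(t)}=0$. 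This is the a priori estimate on $p_{xx}$, and it turns the identity into $a'(t)=-a(t)^2+2\int_0^1 u_x(t,\xi(t),y)^2\,dy$.

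To close this I would analyze the profile $A(t,y):=u_x(t,\xi(t),y)$, which has $\int_0^1 A(t,y)\,dy=0$ by \eqref{e:IncomEqt}--\eqref{e:bdryCon}. Since $u_0(\hat x,\cdot)$ is constant, a short induction from $\tfrac{D}{Dt}u_y=0$ shows that all pure $y$-derivatives of $u$ of order $\ge1$ vanish on the column; differentiating $\tfrac{D}{Dt}u_y=0$ once in $x$ and once more in $y$ and using this, one obtains, along the fluid-particle trajectories foliating the column,
\[
\tfrac{D}{Dt}u_{xy}=-u_x\,u_{xy},\qquad \tfrac{D}{Dt}u_{xyy}=-u_{xy}^2 .
\]
Hence at each height $u_{xy}$ keeps the sign of the corresponding value of $u_{0xy}(\hat x,\cdot)$, which by \eqref{e:u0xy}--\eqref{e:u0xyy} is negative on $(0,1]$, so $A(t,\cdot)$ is decreasing; and $u_{xyy}=u_{0xyy}(\hat x,\cdot)-\int_0^t u_{xy}^2\,ds$ stays negative at each interior height by \eqref{e:u0xyy}, so $A(t,\cdot)$ is concave. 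A concave zero-mean function on $[0,1]$ lies above its chord, so $A(t,0)+A(t,1)\le0$ and therefore $|A(t,y)|\le|A(t,1)|$ for all $y$; Lemma~\ref{l:1third} then gives $\int_0^1 A(t,y)^2\,dy\le\tfrac13 A(t,1)^2=\tfrac13 a(t)^2$. Plugging this in, $a'(t)\le-a(t)^2+\tfrac23 a(t)^2=-\tfrac13 a(t)^2$, with $a(0)=u_{0x}(\hat x,1)<0$ since $u_{0x}(\hat x,\cdot)$ is strictly decreasing (by \eqref{e:u0xy}--\eqref{e:u0xyy}) with zero mean. A comparison argument then yields $a(t)\le 3u_{0x}(\hat x,1)/\big(3+u_{0x}(\hat x,1)\,t\big)\to-\infty$ as $t\uparrow T:=3/|u_{0x}(\hat x,1)|$; consequently, on $[0,T)$ either $\|u\|_{L^\infty}$ or $\|p_x\|_{L^\infty}$ becomes unbounded, or the solution stays smooth up to $T$ and $u_x(t,X(t,\hat x,1),1)=a(t)\to-\infty$, which is the dichotomy of Theorem~\ref{t:main}.

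The step I expect to be the main obstacle is the third one: verifying that the pointwise sign conditions \eqref{e:u0xy}--\eqref{e:u0xyy} are genuinely propagated along the frozen column — the identities $\tfrac{D}{Dt}u_{xy}=-u_x u_{xy}$ and $\tfrac{D}{Dt}u_{xyy}=-u_{xy}^2$ require a careful accounting of exactly which derivatives of $u$ vanish on $\{\xi(t)\}\times[0,1]$ — and then extracting the sharp constant $\tfrac13$ of Lemma~\ref{l:1third} from monotonicity, concavity and the zero-mean constraint; this is what converts the otherwise useless sign $p_{xx}(t,\xi(t))\le0$ into a Riccati inequality with a coefficient strong enough to blow up. A secondary point is the careful interplay, via the local well-posedness theory, between the life-span of the smooth solution and the blow-up time $T$ produced by the Riccati argument.
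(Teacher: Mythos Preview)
Your proposal is correct and follows essentially the same route as the paper: reduce to the vertical column through $\hat x$, derive the closed Riccati-type equation for $u_x$ there via the $p_{xx}$ identity, propagate the convexity of $-u_x$ (your identities $\tfrac{D}{Dt}u_{xy}=-u_xu_{xy}$ and $\tfrac{D}{Dt}u_{xyy}=-u_{xy}^2$ are exactly the paper's Lemma~\ref{l:ayt0} rewritten), and close with Lemma~\ref{l:1third}. The only cosmetic difference is that the paper uses the full time-dependent Galilean shift $g(t)=X(t,\hat x,y_0)$ (Lemma~\ref{l:2.2}) to pin the column at $x=0$ with $u(t,0,\cdot)\equiv 0$, whereas you use only the constant shift and then track the moving column $\xi(t)$; also, to invoke Lemma~\ref{l:1third} you should make explicit that $A_y(t,0)=0$, which follows from your transport identity for $u_{xy}$ together with $v(t,x,0)=0$ and \eqref{e:u0xy}.
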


  \begin{proof}
      Our strategy is to show that $u_x$ blows up in finite time assuming that both $u$ and $p_x$ stay finite. That is,
      we will prove \eqref{e:limux} provided that \[\max_{t\geq0} \{\| u(t)\|_{L^\infty},\| p_x(t)\|_{L^\infty}\} < +\infty.\]\\\\
    \underline{Step1:} (Simplify the Problem)\\

    The first step is to reduce our problem by using the following ``freezing" lemma.
  \begin{lem}[Freezing the Solution]\label{l:2.2}
      Let $(u,v,p)$ be a smooth solution to \eqref{e:HHE}-\eqref{e:bdryCon} with the initial property \eqref{e:u0}. Assume that there exists a
      constant $M$ such that
      \[\max_{t\geq0} \{\| u(t)\|_{L^\infty},\| p_x(t)\|_{L^\infty}\}\le M < +\infty.\]
      Then there exists a classical invariant transformation
      \[(u,v,p,t,x,y)\mapsto (\tilde{u},\tilde{v},\tilde{p},\tilde{t},\tilde{x},\tilde{y})\]
      such that
      \begin{itemize}
        \item[(i)] $(\tilde{u},\tilde{v},\tilde{p})$ satisfies \eqref{e:HHE}, \eqref{e:IncomEqt} and \eqref{e:bdryCon},
        \item[(ii)] $\tilde{u}_{\tilde{x}} (\tilde{t},\tilde{x},\tilde{y})=u_x(t,x,y),$
        \item[(iii)] $\tilde{u}(\tilde{t},0,\tilde{y})\equiv 0.$
      \end{itemize}
      In addition, if the initial data $u_0$ also satisfies properties \eqref{e:u0xy}-\eqref{e:u0xyy}, then we have
       \begin{itemize}
        \item[(iv)] $\tilde{u}_{\tilde{x}\tilde{y}}(0,0,0)=0,$
        \item[(v)] $\tilde{u}_{\tilde{x}\tilde {y}\tilde{y}}(0,0,\tilde{y})<0$ for all $\tilde{y}\in(0,1)$.
      \end{itemize}
    \end{lem}

    The proof of lemma \ref{l:2.2} will be given in section \ref{s:proof}. The importance of lemma \ref{l:2.2} is that without loss of generality, we may assume that $\hat{x}=0$ and
    \begin{align}\label{e:u}
      u(t,0,y)\equiv 0, \quad \text{ for all }\;(t,y)\in\RE^+\times[0,1].
    \end{align}

    A direct consequence of \eqref{e:u} is
    \begin{align}\label{e:uy}
      u_y(t,0,y)\equiv 0.
    \end{align}

    Moreover, using \eqref{e:u} and lemma \ref{l:2.2}, we can rewrite our aim \eqref{e:limux} as
    \begin{align}\label{e:limuxneg}
      \lim_{t\to T^-} u_x(t,0,1)=-\infty.
    \end{align}
    \\
  \underline{Step2:}(Simplify the System)\\

  In this step, we will further simplify the system as follows.

  Differentiating \eqref{e:HHE} with respect to $x$, we obtain
  \begin{align}\label{e:DHHE}
    u_{xt}+uu_{xx}+u_x^2+vu_{xy}+v_xu_y=-p_{xx}.
  \end{align}
  Integrating \eqref{e:DHHE} with respect to $y$ over $[0,1]$, using \eqref{e:IncomEqt}, \eqref{e:bdryCon} and the fact that $p$ is independent of $y$, we obtain an integral representation
  \begin{align}\label{e:pxx}
    -p_{xx}=2\int^1_0 uu_{xx}+u_x^2\;dy.
  \end{align}
  Let $a(t,y):=-u_x(t,0,y), \;a_0(y):=-u_{0x}(0,y) $ and v$(t,y):=v(t,0,y).$ Then restricting \eqref{e:DHHE} to $x=0$, and using \eqref{e:u}, \eqref{e:uy} and \eqref{e:pxx}, we have
  \begin{align}\label{e:at}
    a_t+\text{v}a_y=a^2-2\int^1_0 a^2\;dy.
  \end{align}
  It follows from the definition of $a$ and the incompressibility condition \eqref{e:IncomEqt} that
  \begin{align}
    \text{v}_y=a,
  \end{align}
  and hence, the boundary condition \eqref{e:bdryCon} implies that
  \begin{align}\label{e:inta}
    \text{v}(t,0) = \text{v}(t,1) = 0 \text{ and } \int^1_0 a\;dy=0.
  \end{align}
  Furthermore, the initial data \eqref{e:InitialData} gives
  \begin{align}\label{e:a0y}
    a(0,y)=a_0(y),
  \end{align}
  and hypotheses \eqref{e:u0xy} and  \eqref{e:u0xyy} become
  \begin{align}\label{e:a0y0}
   a_{0y}(0)=0,
  \end{align}
  \begin{align}\label{e:a0yy}
    a_{0yy}>0.
  \end{align}
  Lastly, our aim \eqref{e:limuxneg} becomes
  \begin{align}\label{e:limat1}
    \lim_{t\to T^-} a(t,1)=+\infty.
  \end{align}
  \\
  \underline{Step3:}(Blowup Estimate)\\

  In this step we need two lemmas as follows.

  \begin{lem}\label{l:ayt0}
    Let $a:\RE^+\times[0,1]\to \RE $ be a smooth solution to
    $(\ref{e:at})-(\ref{e:a0y})$. If $a_0$ satisfies
    $(\ref{e:a0y0})-(\ref{e:a0yy})$, then
    \[a_y(t,0) \equiv 0 \text{ and } a_{yy}>0.\]
  \end{lem}
  \begin{lem}\label{l:1third}
    Let $f:[0,1]\to\RE $ be a $C^2$ function with the following
    properties:
    \begin{itemize}
      \item[(i)] $f'(0)=0 \quad\text{and}\quad f''>0,$\\
      \item[(ii)] $ \int^1_0 f\;dy=0.$
    \end{itemize}
    Then $f(1)>0$ and
    \begin{align}\label{e:1third}
      \int^1_0 f^2\;dy \le \frac{1}{3} f(1)^2.
    \end{align}
  \end{lem}

    The proof of lemma \ref{l:ayt0} is based on the characteristics method and will be given in section \ref{s:pflemma}. On the other hand, lemma \ref{l:1third} is just an elementary property for the convex functions, so we leave it for the reader. For a proof of lemma \ref{l:1third}, we refer the reader to lemma 3.4.3 of \cite{Won10} for instance.

     Assuming these lemmas for the moment, we can prove the blowup \eqref{e:limat1} as follows.

     It follows from lemma \ref{l:ayt0} and \eqref{e:inta} that $a(t,\cdot)$ satisfies the hypotheses of lemma \ref{l:1third}, so the $L^2$ estimate \eqref{e:1third} implies that
    \[a_t+\text{v}a_y=a^2-2\int^1_0 a^2\;dy\ge a^2-\frac{2}{3}a(t,1)^2.\]

    Since v$(t,1)=0,$ we obtain a Ricatti type inequality
    \begin{align*}
      a_t(t,1)\ge \frac{1}{3} a(t,1)^2,
    \end{align*}
    and hence,
    \begin{equation}\label{e:at1 est}
        a(t,1)\ge \frac{3\;a_0(1)}{3-a_0(1)\;t}.
    \end{equation}
    Finally, applying lemma \ref{l:1third} to $a_0$, we have  $a_0(1)>0$, which and \eqref{e:at1 est} imply that there
    exists a finite time $T>0$ such that \eqref{e:limat1} holds. This completes the proof.

  \end{proof}


\section{Basic Properties and Classical Invariant Transformations
}\label{s:proof}
  The main purpose of this section is to prove lemma \ref{l:2.2}. To do this, we will first study the basic properties of smooth solutions to \eqref{e:HHE}-\eqref{e:bdryCon} as follows.

  In general, smooth solutions to \eqref{e:HHE}-\eqref{e:bdryCon} are not unique because the classical invariant transformation group
  \begin{align}\label{e:Tgroup}
    \begin{cases}
      \tilde{u}:=u-g' & \quad\quad\tilde{t}:=t\\
      \tilde{v}:=v & \quad\quad\tilde{x}:=x-g \quad\quad\quad g:=g(t)\\
      \tilde{p}:=p+xg'' & \quad\quad\tilde{y}:=y
    \end{cases}
  \end{align}
  will produce a new solution of \eqref{e:HHE}-\eqref{e:bdryCon} if $g(0)=g'(0)=0$. Here, we can also use this transformation group to
  prove lemma \ref{l:2.2} if we choose $g$ appropriately.

    In order to choose a suitable $g$, we have to study the characteristics given by the following

  \begin{defn} \label{d:XY eqn}
  The functions $X(t,x_0,y_0)$ and $Y(t,x_0,y_0)$ are called the $x$-component and $y$-component of the characteristic starting from $(x_0,y_0)$ respectively if they satisfy
  \begin{align*}
    \begin{cases}
      \dot{X}(t,x_0,y_0)=u(t,X(t,x_0,y_0),Y(t,x_0,y_0))\\
      X(0,x_0,y_0)=x_0\\
      \dot{Y}(t,x_0,y_0)=v(t,X(t,x_0,y_0),Y(t,x_0,y_0))\\
      Y(0,x_0,y_0)=y_0,
    \end{cases}
  \end{align*}
  where the dot represents $\frac{d}{dt}$. We may also write $X(t)$ and $Y(t)$ if there is no ambiguity.
  \end{defn}

  The characteristics $X(t)$ of a smooth solution $(u,v,p)$ of the hydrostatic Euler equations \eqref{e:HHE}-\eqref{e:IncomEqt} has an interesting property, which can be stated as follows:

  \begin{prop}\label{p:prop1}
    If $u_0(x_0,y_0)=u_0(x_0,y_1)$, then
    \begin{align}\label{e:xy0eqxy1}
      X(t,x_0,y_0)=X(t,x_0,y_1), \text{ and}
    \end{align}
    \begin{align}\label{e:uy0equy1}
      u(t,X(t,x_0,y_0),Y(t,x_0,y_0))=u(t,X(t,x_0,y_1),Y(t,x_0,y_1)).
    \end{align}
  \end{prop}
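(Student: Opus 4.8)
The plan is to exploit the fact that along the hydrostatic Euler equations the horizontal velocity $u$ is transported by the flow together with a forcing that depends only on $(t,x)$; hence two particles that start on the same vertical line with the same initial horizontal velocity must keep the same $x$-coordinate and the same horizontal velocity for all time. Concretely, fix $x_0$ and the two heights $y_0,y_1$ with $u_0(x_0,y_0)=u_0(x_0,y_1)$. Write $X_i(t):=X(t,x_0,y_i)$ and $Y_i(t):=Y(t,x_0,y_i)$ for $i=0,1$, and set $U_i(t):=u(t,X_i(t),Y_i(t))$. Along a characteristic the material derivative of $u$ is, by \eqref{e:HHE}, $\dot U_i(t)=-p_x(t,X_i(t))$. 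So the pair $(X_i,U_i)$ solves the planar ODE system $\dot X_i=U_i$, $\dot U_i=-p_x(t,X_i)$, with initial data $X_i(0)=x_0$, $U_i(0)=u_0(x_0,y_i)$.

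The key step is then a uniqueness argument for this ODE system. Since $p$ is $y$-independent and the solution $(u,v,p)$ is smooth, the function $(t,x)\mapsto -p_x(t,x)$ is locally Lipschitz in $x$ (uniformly on compact time intervals), so the system $\dot X=U$, $\dot U=-p_x(t,X)$ satisfies the Picard--Lindelöf hypotheses. Because the initial data for $i=0$ and $i=1$ coincide — $X_0(0)=X_1(0)=x_0$ and, by hypothesis, $U_0(0)=U_1(0)$ — uniqueness forces $X_0(t)\equiv X_1(t)$ and $U_0(t)\equiv U_1(t)$ for as long as the characteristics exist. The first identity is exactly \eqref{e:xy0eqxy1}, and the second is exactly \eqref{e:uy0equy1} after unwinding the definition $U_i(t)=u(t,X_i(t),Y_i(t))$.

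I would present it by first deriving $\dot U_i=-p_x(t,X_i)$ from \eqref{e:HHE} and the chain rule, noting carefully that the forcing is independent of $Y_i$, so the $(X_i,U_i)$ dynamics decouple from the vertical component $Y_i$; then invoking the ODE uniqueness theorem for the closed $2\times2$ system; and finally reading off the two claimed equalities. It is worth remarking that once $X_0\equiv X_1$ and $U_0\equiv U_1$ are known, the two characteristics need not have the same height — only the same $x$-coordinate and horizontal velocity — which is precisely the structural feature that will later let us choose the translation $g(t)$ in \eqref{e:Tgroup} to ``freeze'' the solution.

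The main obstacle is mostly bookkeeping rather than depth: one must make sure the ODE is genuinely closed, i.e.\ that the right-hand side $-p_x(t,X)$ really depends only on $(t,X)$ and not on the vertical position, which is where the $y$-independence of the pressure in the hydrostatic system is essential (this is what fails for the full Euler equations). One should also be slightly careful about the interval of existence: the identities hold on the common maximal interval of existence of the characteristics, which is fine since we are assuming $(u,v,p)$ is a smooth solution on the relevant time interval. No Riccati estimate or any of the later machinery is needed here; this proposition is a clean consequence of \eqref{e:HHE}, the $y$-independence of $p$, and ODE uniqueness.
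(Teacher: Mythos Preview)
Your proof is correct and is essentially the same as the paper's: the paper phrases it as a second-order ODE $\ddot X=-p_x(t,X)$ with initial data $X(0)=x_0$, $\dot X(0)=u_0(x_0,y_0)$, while you write the equivalent first-order system $\dot X=U$, $\dot U=-p_x(t,X)$, and both conclude by ODE uniqueness. The paper obtains \eqref{e:uy0equy1} by differentiating \eqref{e:xy0eqxy1} in $t$ (since $\dot X_i=U_i$), which is the same as your $U_0\equiv U_1$.
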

  \begin{proof}
    The equality \eqref{e:xy0eqxy1} follows directly from the uniqueness of ODE because both $X(t,x_0,y_0)$ and $X(t,x_0,y_1)$ satisfy the same ODE
      \[\ddot{X}=-p_x(t,X)\]
    with the same initial data
      \[\begin{cases}
        X(0)=x_0\\
        \dot{X}(0)=u_0(x_0,y_0).
      \end{cases}\]
    Equality \eqref{e:uy0equy1} can be obtained by differentiating \eqref{e:xy0eqxy1} with respect to $t$.
  \end{proof}

  Proposition \eqref{p:prop1} and the boundary condition \eqref{e:bdryCon} give us a useful
  \begin{cor}\label{c:indept}
    If $u_0(\hat{x},y_0)$ is independent of $y_0$, then $X(t,\hat{x},y_0)$ and $u(t,X(t,\hat{x},y_0),y)$ are also independent of $y_0$ and $y$.
  \end{cor}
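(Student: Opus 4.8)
The plan is to deduce both statements from Proposition \ref{p:prop1} together with the boundary condition \eqref{e:bdryCon}, the hypothesis being exactly that $u_0(\hat x,\cdot)$ is constant so that \emph{any} two heights $y_0,y_1$ satisfy $u_0(\hat x,y_0)=u_0(\hat x,y_1)$.

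First I would handle $X$. Fix arbitrary $y_0,y_1\in[0,1]$. Since $u_0(\hat x,y_0)=u_0(\hat x,y_1)$, equation \eqref{e:xy0eqxy1} of Proposition \ref{p:prop1} gives $X(t,\hat x,y_0)=X(t,\hat x,y_1)$ for all $t$. As $y_0,y_1$ were arbitrary, $X(t,\hat x,\cdot)$ does not depend on its height argument; write $\bar X(t):=X(t,\hat x,y_0)$. This already proves the assertion for $X$.

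Next I would show $u(t,\bar X(t),y)$ is independent of $y$. The key auxiliary fact is that the map $y_0\mapsto Y(t,\hat x,y_0)$ is onto $[0,1]$ for each fixed $t$. Indeed, by \eqref{e:bdryCon} the constant functions $Y\equiv 0$ and $Y\equiv 1$ solve the $Y$-ODE of Definition \ref{d:XY eqn} with initial heights $0$ and $1$ respectively, so uniqueness of ODE solutions forces $Y(t,\hat x,0)\equiv 0$ and $Y(t,\hat x,1)\equiv 1$; meanwhile smoothness of $(u,v)$ gives continuous dependence of $Y(t,\hat x,y_0)$ on $y_0$, so by the intermediate value theorem every $y\in[0,1]$ equals $Y(t,\hat x,y_0)$ for some $y_0\in[0,1]$. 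Now apply \eqref{e:uy0equy1} with $x_0=\hat x$: using $X(t,\hat x,y_0)=X(t,\hat x,y_1)=\bar X(t)$, it says that $u\bigl(t,\bar X(t),Y(t,\hat x,y_0)\bigr)$ is the same for all $y_0\in[0,1]$; call this common value $c(t)$. Given any $y\in[0,1]$, choose $y_0$ with $Y(t,\hat x,y_0)=y$ to conclude $u(t,\bar X(t),y)=c(t)$. Hence $u(t,\bar X(t),y)$ depends on neither the height $y$ nor the label $y_0$, which completes the proof.

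I expect the only subtle step to be the surjectivity of $y_0\mapsto Y(t,\hat x,y_0)$ onto $[0,1]$; it rests on combining the fixed-boundary property $Y(t,\hat x,0)\equiv0$, $Y(t,\hat x,1)\equiv1$ (a uniqueness argument from \eqref{e:bdryCon}) with continuous dependence on initial data, which is available here since the solution $(u,v,p)$ is assumed smooth. Everything else is an immediate consequence of Proposition \ref{p:prop1}.
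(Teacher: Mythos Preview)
Your proof is correct and follows exactly the route the paper indicates: the corollary is stated as an immediate consequence of Proposition \ref{p:prop1} together with the boundary condition \eqref{e:bdryCon}, with no further argument given. Your surjectivity step for $y_0\mapsto Y(t,\hat x,y_0)$---using \eqref{e:bdryCon} to pin $Y(t,\hat x,0)\equiv 0$, $Y(t,\hat x,1)\equiv 1$ and then the intermediate value theorem---is precisely the natural way to invoke \eqref{e:bdryCon} here, and simply makes explicit what the paper leaves to the reader.
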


  Now, using the knowledge above, we are going to prove lemma \ref{l:2.2}.

  From the hypothesis \eqref{e:u0}, we know that $u_0(\hat{x},y_0)$ is independent of $y_0$, so by corollary \ref{c:indept}, $X(t,\hat{x},y_0)$ is independent of $y_0$. Therefore, we can take $g(t):=X(t,\hat{x},y_0)$ for any $y_0\in [0,1]$. Since $\abs{g'(t)}=\dot{\abs{X}}=\abs{u(t,X,Y)}\le M$ and  $\abs{g''(t)}=\ddot{\abs{X}}=\abs{p_x(t,X)}\le M,$ we can apply the invariant transformation \eqref{e:Tgroup} as long as $(u,v,p)$ is smooth.

  To complete the proof, we have to check that this transformation satisfies $(i)-(v)$ in lemma \ref{l:2.2}. By direct computations, $(i) - (ii)$ and $(iv) - (v)$ can be checked easily by using the following two facts:\\
    \begin{itemize}
      \item[(a)] $\partial_{\tilde{t}} = \partial_t+g'\partial_x,\;
      \partial_{\tilde{x}}=\partial_x \text{ and }
      \partial_{\tilde{y}}=\partial_y,$\\
      \item[(b)] $g(0)=\hat{x}.$\\
    \end{itemize}

  Lastly, $(iii)$ holds because
  \begin{align*}
    \tilde{u}(\tilde{t},0,\tilde{y}) = &\;
    u(\tilde{t},g(\tilde{t}),\tilde{y})-g'(\tilde{t})\\
    = &\;
    u(\tilde{t},g(\tilde{t}),\tilde{y})-u(\tilde{t},g(\tilde{t}),
    Y(\tilde{t},\hat{x},y_0))\\
    = &\; 0,
  \end{align*}
  where we applied corollary \ref{c:indept} in the last equality.

  \begin{rem}
    One may conclude this section by a thought experiment: Imagine that there is a water flow $(u,v,p)$ in an infinitely long river   $\RE\times[0,1]$. As a stationary observer, you can stand at a fixed point on the river bank, say the origin, then you will see the flow as
    $(u,v,p)$. On the other hand, you can move parallel to the river bank with velocity $g'(t)$, then you will see the flow as    $(\tilde{u},\tilde{v},\tilde{p})$. In other words, both $(u,v,p)$ and $(\tilde{u},\tilde{v},\tilde{p})$ describe the same physical phenomenon but with different reference frames, so lemma \ref{l:2.2} is just stating that if the initial horizontal velocity $u_0$ has a line $x=\hat{x}$ with a constant
    horizontal velocity $\hat{u}$, then you can freeze this constant horizontal velocity line provided that you move appropriately.

    Another physical interpretation can be made from this thought experiment is that the blowup \eqref{e:limu&px} of $u$ and $p_x$ can be seen as the consequence of an observer moving at an infinite speed or accelerating at an infinite rate respectively.
  \end{rem}

\section{Proof of Technical Lemma \ref{l:ayt0}}\label{s:pflemma}

In this section, we will prove the technical lemma \ref{l:ayt0}.

\begin{proof}[Proof of lemma \ref{l:ayt0}:]

  First, differentiating \eqref{e:at} with respect to $y$ once, we have
  \[a_{yt}+\text{v}a_{yy}=aa_y,\]
  which implies $a_y(t,0)\equiv 0$ provided that $a_{0y}(0)=0$ and v$(t,0)\equiv 0.$ Second, differentiating \eqref{e:at} with respect to $y$ twice, we
  obtain
  \[a_{yyt}+\text{v}a_{yyy}=a^2_y\ge 0,\]
  which means that $a_{yy}$ is increasing along every characteristic, so \eqref{e:a0yy} implies $a_{yy}>0.$

\end{proof}

  \bibliographystyle{amsalpha}
  \bibliography{HEEReference}

\providecommand{\bysame}{\leavevmode\hbox to3em{\hrulefill}\thinspace}
\providecommand{\MR}{\relax\ifhmode\unskip\space\fi MR }
\providecommand{\MRhref}[2]{%
  \href{http://www.ams.org/mathscinet-getitem?mr=#1}{#2}
}
\providecommand{\href}[2]{#2}
\begin{thebibliography}{KTVZ11}

\bibitem[Bre99]{Bre99}
Yann Brenier, \emph{{Homogeneous hydrostatic flows with convex velocity
  profiles}}, Nonlinearity \textbf{12} (1999), no.~3, 495--512.

\bibitem[Bre03]{Bre03}
\bysame, \emph{{Remarks on the derivation of the hydrostatic Euler equations}},
  Bull. Sci. Math. \textbf{127} (2003), no.~7, 585--595.

\bibitem[Bre08]{Bre08}
\bysame, \emph{{Generalized solutions and hydrostatic approximation of the
  Euler equations}}, Phys.D \textbf{237} (2008), no.~14--17, 1982--1988.

\bibitem[CINT]{CINT}
Chongsheng Cao, Slim Ibrahim, Kenji Nakanishi, and Edriss~S. Titi,
  \emph{{Finite-time blowup for the inviscid primitive equations of oceanic and
  atomspheric dynamics}}, arXiv:1210.7337v1 [math.AP].

\bibitem[E00]{E00}
Weinan E, \emph{{Boundary layer theory and the zero-viscosity limit of the
  Navier-Stokes equation}}, Acta Math. Sin. (Engl. Ser.) \textbf{16} (2000),
  no.~2, 207--218.

\bibitem[EE97]{EE97}
Weinan E and Bjorn Engquist, \emph{{Blowup of solutions of the unsteady
  Prandtl's equation}}, Comm. Pure Appl. Math. \textbf{50} (1997), no.~12,
  1287--1293.

\bibitem[Gre99]{Gre99}
Emmanuel Grenier, \emph{{On the derivation of homogeneous hydrostatic
  equations}}, M2AN Math. Model. Numer. Anal. \textbf{33} (1999), no.~5,
  965--970.

\bibitem[KTVZ11]{KTVZ11}
Igor Kukavica, Roger Temam, Vlad~C. Vicol, and Mohammed Ziane, \emph{{Local
  existence and uniqueness for the hydrostatic Euler equations on a bounded
  domain}}, J. Differential Equations \textbf{250} (2011), no.~3, 1719--1746.

\bibitem[Lio96]{Lio96}
Pierre-Louis Lions, \emph{{Mathematical topics in fluid mechanics. Vol. 1.
  Incompressible models}}, Oxford Lecture Series in Mathematics and its
  Applications, 3, Oxford Science Publications, The Clarendon Press, Oxford
  University Press, New York, 1996.

\bibitem[MW12]{MW}
Nader Masmoudi and Tak~Kwong Wong, \emph{{On the $H^s$ theory of hydrostatic
  Euler equations}}, Arch. Ration. Mech. Anal. \textbf{204} (2012), no.~1,
  231--271.

\bibitem[Won10]{Won10}
Tak~Kwong Wong, \emph{{On the wellposedness of boundary layer equations}},
  Ph.D. thesis, New York University, 2010.

\end{thebibliography}

\end{document}